\newcommand{\N}{{\mathbb N}}
\newcommand{\R}{{\mathbb R}}
\newcommand{\Set}{{\mathcal S}}
\newcommand{\NN}{{\mathcal N}}
\newcommand{\KK}{{\mathcal K}}
\newcommand{\PP}{{\mathcal P}}
\newcommand{\TT}{{\mathcal T}}
\newcommand{\red}{\textcolor{red}}
\newtheorem{theorem}{Theorem}[section]
\newtheorem{proposition}[theorem]{Proposition}
\newtheorem{corollary}[theorem]{Corollary}
\newtheorem{lemma}[theorem]{Lemma}
\theoremstyle{definition}
\newtheorem{definition}[theorem]{Definition}
\newtheorem{remark}[theorem]{Remark}
\title[relaxed complete partitions]
{relaxed complete partitions: \\ an error-correcting bachet's problem}
\author{Jorge Bruno \& Edwin O'Shea}
\thanks{JB is supported by an EMBARK fellowship of the Irish Research Council for Science, Engineering \& Technology. }
\address{School of Mathematics, NUI, Galway, 1 University Road, Galway, Ireland. }   
\address{Department of Mathematics and Statistics, James Madison University, Harrisonburg, VA, 22807.}
\email{brujo.email@gmail.com, osheaem@jmu.edu}
\date{\today}
\begin{document}

\begin{abstract} 
Motivated by an error-correcting generalization of Bachet's weights problem, 
we define and classify {\em relaxed complete partitions}. We show that these 
partitions enjoy a succinct description in terms of lattice points in polyhedra, with adjustments 
in the error being commensurate with translations in the defining hyperplanes. 
Our main result is that the enumeration of the minimal such partitions (those with fewest possible parts) 
is achieved via Brion's formula. This generalizes work of Park on classifying complete partitions and 
that of R{\o}dseth on enumerating minimal complete partitions. 
\end{abstract}

\maketitle

\vspace{-.1in}

\section{Introduction}

Recorded by Fibonacci \cite[On IIII Weights Weighing Forty Pounds]{Fib} in 1202, Bachet's 
problem ought to be regarded as one of the earliest problems in integer partitions. 
It asks: {\em what is the least number of pound weights that can be used on a scale pan to 
weigh any integral number of pounds from 1 to 40 inclusive, if the weights 
can be placed in either of the scale pans~?} Its solution consists of four parts and can be written 
as $40 = 1+3+9+27$ and is unique. Replacing 40 with any integer $m$, this problem has been generalized in a number of ways: 
by MacMahon \cite{MacQuar} in 1886; by Brown \cite{Bro} in 1961 and by Park \cite{Par} in 1998. 
The latter was the first to describe all possible solutions to Bachet's problem as originally stated, when {\em 40} is 
replaced with any integer $m$. A lively expository account of these various generalizations of Bachet's problem will appear 
in \cite{O'Sh}. 

We generalize Bachet's problem in a {\em relaxed} or {\em error-correcting} manner by considering 
the following variant of it (first asked by Tanton \cite{Tanton}) : 
{\em given a fixed but unknown integer weight $l$, weighing 
no more than 80 pounds, what is the least number of integer weights 
that can be used on a scale pan to discern $l$'s value, 
if the weights can be placed in either of the scale pans~?} 
Here, we still only need four parts and the partition $80 = 2+6+18+54$ will suffice: 
if the weight $l$ 
is even we can achieve its exact value with the parts of $2+6+18+54$ and a balanced scale and 
if the weight $l$ is odd then both $l-1$ and $l+1$ can be achieved by the parts of $2+6+18+54$ 
and the unknown weight can be seen to be heavier than the former and lighter than the latter. 
This is equivalent to saying the parts of $80 = 2+6+18+54$ can be used to weigh 
every integer between $1$ and $80$ on a two-scale pan, within an error of one. 
This leads us to the following definition. 
\begin{definition}
A partition $m = \lambda_0 + \lambda_1 + \cdots + \lambda_n$ with the parts in increasing order 
is an {\em $e$-relaxed $r$-complete partition} ({\em $(e,r)$-partition} for short) if 
no $e+1$ consecutive integers between $0$ and $rm$ are absent from the set 
$\{ \sum_{i=0}^{n} \alpha_i \lambda_i : \alpha_i \in \{ 0,1,\ldots,r \} \}$. We call the 
partition {\em minimal} if $n$ is as small as possible with this property.
\end{definition}

Park \cite{Par}, motivated by MacMahon's {\em perfect partitions} \cite{MacQuar}, 
called the $(0,r)$-partitions simply {\em $r$-complete partitions} and as a result of Park's work 
it can be shown that $40 = 1+3+9+27$ is the only minimal $2$-complete partition 
of $40$. This was also known to Hardy \& Wright \cite[\S 9.7]{HarWri}. 
To see the link between minimal $2$-complete partitions and Bachet's problem we only need observe 
that for any $2$-complete partition $m = \lambda_0 + \lambda_1 + \cdots + \lambda_n$, shifting the set 
$\{ \sum_{i=0}^{n} \alpha_i \lambda_i : \alpha_i \in \{ 0,1,2 \} \}$ by $-m$ we get the set 
$\{ \sum_{i=0}^{n} \beta_i \lambda_i : \beta_i \in \{ -1,0,1 \} \}$ which is exactly the set of weights 
achievable by the parts of $m = \lambda_0 + \lambda_1 + \cdots + \lambda_n$ by using both sides 
of the two-scale pan. Notice too that the minimal $1$-complete partitions are the variant of Bachet's 
problem where one can place weights on only one of the two scale pans.
Other simple observations can be made at this juncture. For example, if 
$\lambda_0 + \lambda_1 + \cdots + \lambda_n$ is a $(0,r)$-partition 
of $m$ then $(e+1)\lambda_0 + (e+1)\lambda_1 + \cdots + (e+1)\lambda_n$ is an $(e,r)$-partition of $(e+1)m$ 
-- this partially explains the ``doubling of parts'' in solving Tanton's variant of Bachet's problem. 
The partition $m = 
1+1+ \cdots +1+1
$ 
is always an $(e,r)$-partition of $m$ 
and every $(e,r)$-partition of $m$ is both an $(e+1,r)$-partition and an $(e,r+1)$-partition of $m$. 

The $r$-complete partitions were first classified and enumerated for every integer $m$ by Park \cite{Par} 
and the minimal ones by O'Shea \cite{Osh} (a partial enumeration for the $r=1$ case) and by R{\o}dseth 
in two papers: \cite{RodMpart} for the $r=1$ case and \cite{Rod} for $r \geq 2$. This present piece will 
classify and enumerate the minimal $(e,r)$-partitions for all $e$ and $r$ and, as one would expect, our results 
agree with those of Park and R{\o}dseth when the error term $e$ is set equal to $0$. First, we can classify 
the family of minimal $(e,r)$ partitions as a collection of lattice points in polyhedra.

\vspace{.3cm}

\noindent {\bf Theorem~\ref{the:iff} and Proposition~\ref{pro:min}} 
The $(e,r)$-partitions with $n+1$ parts are precisely the positive integer points 
$(\lambda_0, \lambda_1, \ldots, \lambda_n)$ that satisfy the linear inequalities 
$\lambda_0 \leq \lambda_1 \leq \cdots \leq \lambda_n$, the inequalities 
$\textup{ineq}_i \, : \, \lambda_i \leq (e+1) + r \sum_{j=0}^{i-1}{\lambda_j}$ for each $i \leq n$. 
Furthermore, the partitions with $1$-norm 
$|\lambda| := \lambda_0 + \lambda_1 + \cdots + \lambda_n$ lying in the interval 
$[(e+1)\frac{(r+1)^{n}-1}{r}+1, (e+1)\frac{(r+1)^{n+1}-1}{r}]$ are the minimal such partitions. 

\vspace{.3cm}

We enumerate the minimal $(e,r)$-partitions using {\em Brion's theorem} \cite{Bri}, a general 
formula for enumerating lattice points in polyhedra. 
Schematically Brion's formula is as follows: 
given a simple polyhedron we can form at each vertex 
${\bf v}$ a {\em vertex cone} $\KK_{\bf v}$ and the generating function 
$\sigma_{\KK_{\bf v}}$ for the lattice points in that vertex cone. 
Brion's formula states that the lattice points in the polyhedron 
are encoded precisely by 
$\sum_{\bf v} \sigma_{\KK_{\bf v}}$ where the sum is taken over all 
vertices of the said polyhedron. Section~\ref{sec:brion} will provide the necessary background for 
Brion's Theorem and makes this article mostly self contained.  

This approach of using discrete geometric tools to enumerate families of integer partitions
described by linear inequalities contributes to an existing literature that was especially inspired by the 
1997 papers \cite{BME1, BME2} of Bousquet-M{\'e}lou and Eriksson on {\em lecture hall partitions}.  
These partitions, having originally arose in the context of Bott's formula for enumerating the lengths of 
reduced words in the affine Coxeter groups of type $C$, can be described in terms of lattice points 
in polyhedra. They are celebrated not only because they provide a rich connection between reflection 
groups, discrete geometry and integer partitions but because they also provide a finite version of 
Euler's classical odd-distinct theorem.  A few papers of many that the reader might be interested in are 
Pak's use of lattice point enumeration \cite{Pak} when the family of integer partitions are 
points in a unimodular cone and Corteel, Lee and Savage's \cite{savage-five} practical and 
philosophical introduction to discrete geometric methods in integer partitions. 
Recently, perhaps in homage to the Coxeter context from which lecture hall partitions arose,  Beck, Bliem, Braun and 
Savage \cite{BBBS} study the actions of finite reflections on cones of symmetric compositions. While the relaxed 
complete partitions bear no direct relation to lecture hall partitions they are another example of a 
generalization of a classical problem from integer partitions that is significantly enhanced 
and enriched by viewing it through a modern geometric lens. 

In Section~\ref{sec:enumerate} we enumerate the minimal complete partitions. 
We begin by replacing the original $\lambda$-inequalities of Theorem~\ref{the:iff} and 
Proposition~\ref{pro:min} with $\mu$-inequalities by using the linear transformations 
$
\mu_i := (e+1)(r+1)^i - \lambda_i.
$ 
This translates the interval condition on $|\lambda|$ to all vectors $\mu$ with coordinate sum 
satisfying $0 \leq |\mu| < (e+1)(r+1)^n$, so 
we must only find a generating function for the enumeration of the $\mu$ vectors up to 
order $(e+1)(r+1)^n$. Proposition~\ref{pro:type} shows that the set of $\mu$-inequalities 
describes an $(n+1)$-dimensional simple polyhedron $P_n(e,r)$, combinatorially (but not metrically) 
isomorphic to the $(n+1)$-cube for every $e$ and $r$, and with vertices $\{ \mu(\Set) \}$ that are fully 
determined by the subsets $\Set \subseteq \{0,1,\ldots,n\}$. 

 \begin{figure}[h] 
\vspace{.2cm}
\centering 
\def\svgwidth{500pt} 
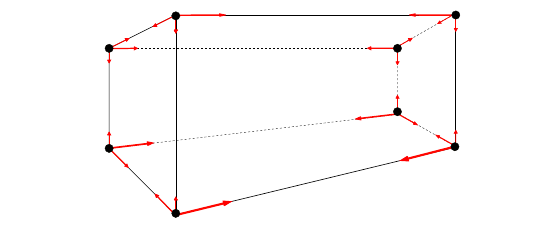 \label{fig:poly}
\caption{The polyhedron $P_2(1,2)$. Vertices and some rays of vertex cones are labelled.}
 \end{figure}

We can then apply Brion's formula 
as it is stated earlier to the $2^{n+1}$ vertex cones of $P_n(e,r)$ but need only account for 
those vertex cones whose order is less than 
$(e+1)(r+1)^n$. We claim that almost all vertex cones are of too high an order: for $r \geq 2$ we must 
only account for $n+1$ vertex cones -- the vertex cone at the origin and all but one of the vertex cones of 
those vertices neighboring the origin in $P_n(e,r)$. If $r=1$ we must also account for an additional 
${n-1}\choose{2}$ neighbors of those neighbors of the origin and no more. This yields our main result. 

\vspace{.3cm}

\noindent {\bf Theorems~\ref{thm:r>1} and \ref{thm:r=1} } The minimal relaxed complete $(e,r)$-partitions 
can be enumerated by the generating function 
(where $\chi(r) = 1$ when $r=1$ and $0$ for $r\geq 2$) 
$$
\displaystyle \sigma_{P_n(e,r)}(x) 
  = 
  \sigma_{\KK_{\mu(\emptyset)}}(x) 
  + 
  \sum_{j=1}^{n} \sigma_{\KK_{\mu(\{j\})}}(x) 
  + 
   \chi(r) \cdot \sum_{j_1=1}^n \sum_{j_2 = j_1+2}^n \sigma_{\KK_{\mu(\{j_1, j_2\})}}(x) + O(x^{(e+1)(r+1)^n}).
$$

Specializing the results for the minimal $(1,2)$-partitions with $n+1 = 3$ parts, we see that 
$12$ lies in the interval $[9, 26]$. There are six minimal $(1,2)$-partitions of $12$: 
$1+3+8$, $1+4+7$, $2+2+8$, $2+3+7$, $2+4+6$ and $2+5+5$. The enumerating function, 
consulting Figure~\ref{fig:poly}, reflects this by looking at the coefficient of 
$x^{26-12} = x^{14}$ in the generating function 
\begin{align*} 
\sigma_{P_2(1,2)}(x) \, = \, &  
x^{|(0,0,0)|}
\frac{1}{1-x^{|\red{(1,2,6)}|}}
\frac{1}{1-x^{|\red{(0,1,2)}|}}
\frac{1}{1-x^{|\red{(0,0,1)}|}} \\ 
 + &
x^{|(0,4,8)|}
\frac{1}{1-x^{|\red{(0, {-1},-2)}|}}
\frac{1}{1-x^{|\red{(1,1,4)}|}}
\frac{1}{1-x^{|\red{(0,0,1)}|}} \\ 
 + &
x^{|(0,0,12)|}
\frac{1}{1-x^{|\red{(0,0,-1)}|}}
\frac{1}{1-x^{|\red{(1,2,2)}|}}
\frac{1}{1-x^{|\red{(0,1,1)}|}}  + O(x^{18})
\end{align*}
$
\displaystyle  
= 
\frac{1}{(1-x^9)(1-x^3)(1-x^1)} - \frac{x^{15}}{(1-x^3)(1-x^6)(1-x^1)} - \frac{x^{13}}{(1-x)(1-x^5)(1-x^2)} + O(x^{18}).
$

\noindent 
Note that the coefficient of $x^{14}$ in $\sigma_{P_2(1,2)}(x)$ is $(7-1)=6$. Full justification for the derivation 
of the polyhedron $P_n(e,r)$ and its lattice point enumerating function $\sigma_{P_n(e,r)}(x)$ will be given 
in Sections~\ref{sec:brion} and~\ref{sec:enumerate}. 

\section{Classifying relaxed complete partitions} \label{sec:classify}

We begin with classifying the relaxed complete partitions as a collection of lattice points in polyhedra 
and describe the minimal such partitions amongst these. A novel feature of the classification is 
that the defining hyperplanes for the polyhedra that cut out the $(e,r)$-partitions arise from 
simply translating the defining hyperplanes of the $(0,r)$-partitions by a linear factor 
of $e$. In this section we call $\{ \sum_{j=0}^{n} \alpha_j \lambda_j : \alpha_j \in \{ 0,1,\ldots,r \} \}$ 
the {\em r-cover} of $\lambda_0+\lambda_1+\cdots+\lambda_n$. 
The following theorem the proof of which closely follows that of \cite{Par}. 

\begin{theorem} \label{the:iff} 
Let $m = \lambda_0 + \lambda_1 + \cdots + \lambda_n$ be a partition with $\lambda_0 \leq e + 1$. 
Then $m = \lambda_0 + \lambda_1 + \cdots + \lambda_n$ is an $(e,r)$-partition if and only if 
$\lambda_i \leq (e+1) + r \sum_{j=0}^{i-1}{\lambda_j}$, for all $i \leq n$.
\end{theorem}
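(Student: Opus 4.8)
The plan is to phrase everything in terms of the $r$-cover. Write $m_i := \sum_{j=0}^{i}\lambda_j$, so that $m_n = m$, and let $S_i := \{\sum_{j=0}^{i}\alpha_j\lambda_j : \alpha_j \in \{0,1,\ldots,r\}\}$ be the $r$-cover of the truncation $\lambda_0 + \cdots + \lambda_i$. Since $0, rm_i \in S_i \subseteq \{0,1,\ldots,rm_i\}$, the partition is an $(e,r)$-partition exactly when consecutive elements of $S_n$ differ by at most $e+1$; I will call a subset $T \subseteq \{0,1,\ldots,N\}$ with $0, N \in T$ \emph{$e$-dense in $[0,N]$} when it has this property. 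I would also record two elementary facts used repeatedly: $S_i = \bigcup_{t=0}^{r}(t\lambda_i + S_{i-1})$, and $\min S_i = 0$, $\max S_i = rm_i$.

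For the ``only if'' direction I would argue by contraposition. Suppose $\textup{ineq}_i$ fails for some $i$, so $\lambda_i \ge (e+1) + rm_{i-1} + 1$. Because the parts increase, any $\sum_{j=0}^{n}\alpha_j\lambda_j \in S_n$ with $\alpha_j = 0$ for all $j \ge i$ is at most $rm_{i-1}$, whereas any such sum with $\alpha_j \ge 1$ for some $j \ge i$ is at least $\lambda_j \ge \lambda_i \ge rm_{i-1} + e + 2$. Hence the $e+1$ consecutive integers $rm_{i-1}+1, \ldots, rm_{i-1}+e+1$ are all absent from $S_n$, and a one-line check ($r\lambda_i > e+1$) confirms they lie in $[0,rm]$; so the partition is not an $(e,r)$-partition. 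The standing hypothesis $\lambda_0 \le e+1$ is precisely $\textup{ineq}_0$, so taking $i = 0$ is included in this.

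For the ``if'' direction I would induct on $i$ and prove: if $\textup{ineq}_0,\ldots,\textup{ineq}_i$ all hold then $S_i$ is $e$-dense in $[0,rm_i]$, the theorem being the case $i = n$. The base case $S_0 = \{0,\lambda_0,\ldots,r\lambda_0\}$ has consecutive gaps equal to $\lambda_0 \le e+1 = \textup{ineq}_0$. The engine of the inductive step is the elementary lemma: \emph{if $T$ is $e$-dense in $[0,N]$ and $0 < d \le N + e + 1$, then $T \cup (d+T)$ is $e$-dense in $[0,N+d]$.} Granting this, I set $T_t := \bigcup_{s=0}^{t}(s\lambda_i + S_{i-1})$, so that $T_0 = S_{i-1}$, $T_r = S_i$ and $T_{t+1} = T_t \cup (\lambda_i + T_t)$; since $T_t$ is $e$-dense in $[0,\,rm_{i-1} + t\lambda_i]$, applying the lemma with $d = \lambda_i$ requires $\lambda_i \le rm_{i-1} + t\lambda_i + e + 1$, which is exactly $\textup{ineq}_i$ when $t = 0$ and holds trivially for $t \ge 1$. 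Iterating over $t = 0,1,\ldots,r-1$ then gives that $S_i = T_r$ is $e$-dense in $[0,rm_i]$.

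The main obstacle is the lemma, which I expect to prove by contradiction with a short case split. Assuming $c, c+1, \ldots, c+e$ are all missing from $T \cup (d+T)$ for some $c$ with $c \ge 0$ and $c + e \le N + d$, I would show that one of the following must hold: $\{c,\ldots,c+e\} \subseteq [0,N]$ (impossible since $T$ is $e$-dense), or $\{c-d,\ldots,c+e-d\} \subseteq [0,N]$ (the same contradiction after translating by $-d$), or the window $\{c,\ldots,c+e\}$ contains $N$ or $d$ --- both of which lie in $T \cup (d+T)$. The bound $d \le N + e + 1$ is exactly what forces one of these three alternatives. Apart from the lemma, the only real care needed is bookkeeping the off-by-one built into ``$e$-dense'' and confirming that every witnessing window genuinely sits inside $[0,rm]$.
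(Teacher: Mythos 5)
Your proof is correct, and both directions follow the paper's strategy: the ``only if'' direction is verbatim the paper's contrapositive argument, exhibiting the same witnessing gap $r\sum_{j<i}\lambda_j + \{1,\ldots,e+1\}$, and the ``if'' direction is the same induction on the number of parts, exploiting the decomposition $S_i = \bigcup_{t=0}^{r}(t\lambda_i + S_{i-1})$. The one genuine difference is how the inductive step is executed. The paper argues pointwise: given a target $l$ in the new range it sets $\alpha_n = \lceil (l - r\sum_{j<n}\lambda_j)/\lambda_n\rceil$ and applies the inductive hypothesis to the remainder $l - \alpha_n\lambda_n$. You instead isolate a reusable combinatorial lemma --- adjoining a translate $d+T$ of an $e$-dense set with $d \le N+e+1$ preserves $e$-density --- and iterate it over $t=0,\ldots,r-1$, with $\textup{ineq}_i$ needed only at $t=0$. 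Your packaging is slightly more robust: the paper's remainder $l-\alpha_n\lambda_n$ can be negative (as low as $-e$), so its appeal to the inductive hypothesis on ``integers between $0$ and $r\sum_{j<n}\lambda_j$'' is, strictly read, not quite applicable there and needs the observation that $0$ lies in the cover; your three-way case split in the lemma (window inside $[0,N]$, window inside $[d,N+d]$, or window containing $N$ or $d$) absorbs exactly this boundary issue. The cost is an extra layer of notation; the benefit is that all off-by-one bookkeeping is confined to one elementary lemma.
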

\begin{proof} 
For sufficiency observe that if $\lambda_i > (e+1) + r\sum_{j=0}^{i-1}{\lambda_j}$ 
then the shifted set $r\sum_{j=0}^{i-1}{\lambda_j} + \{1, 2, \ldots, e+1\}$ would also 
be omitted from the $r$-cover of $\lambda_0 + \lambda_1 + \cdots + \lambda_n$. 

We show necessity by inducting on the number of parts in the partition. 
If $n=0$ (the number of parts equals $1$) then $\lambda_0 = m \leq e+1$, in accordance with our hypothesis. 
Let $\lambda_0 + \lambda_1 + \cdots + \lambda_{n-1}$ be an $(e,r)$-partition onto which we append any part 
$\lambda_n$, with $\lambda_{n-1} \leq \lambda_n \leq (e+1) + r \sum_{j=0}^{n-1}{\lambda_j}$. 
We wish to show that every positive integer $l \leq r(\lambda_0 + \lambda_1 + \cdots + \lambda_n)$ is 
within a distance of no greater than $e+1$ of some integer in the $r$-cover of 
$\lambda_0 + \lambda_1 + \cdots + \lambda_n$

If $l \leq r(\lambda_0 + \lambda_1 + \cdots + \lambda_{n-1})$ 
then by our inductive hypothesis we have nothing to show. 
So we can fix $l \leq r \sum_{j=0}^{n}{\lambda_j}$ and $l > r \sum_{j=0}^{n-1}{\lambda_j}$. 
But in this case there will always exist an $1 \leq \alpha_n \leq r$ such that 
$
(\alpha_n - 1)\lambda_n + r \sum_{j=0}^{n-1}{\lambda_j} 
< l  \leq 
\alpha_n \lambda_n + r \sum_{j=0}^{n-1}{\lambda_j}
$ 
or $\alpha_n = \left\lceil \frac{l - r \sum_{j=0}^{n-1}{\lambda_j} }{\lambda_n} \right\rceil$. 
Again, since $l - \alpha_n \lambda_n \leq r \sum_{j=0}^{n-1}{\lambda_j}$ our inductive hypothesis 
tells us that 
$l - \alpha_n \lambda_n$ is within distance $e+1$ of an integer in the $r$-cover of 
$\lambda_0 + \lambda_1 + \cdots + \lambda_{n-1}$ 
and so $l$ must be within distance $e+1$ of an integer 
in the $r$-cover of 
$\lambda_0 + \lambda_1 + \cdots + \lambda_n$.
\end{proof}

Next, we classify the {\em minimal} $(e,r)$-partitions. We have already observed that 
$\lambda_0 \leq (e+1)$ and we can in turn note that 
$\lambda_1 \leq (e+1) + r \lambda_0 \leq (e+1)(r+1)$. An inductive argument shows 
\begin{equation} \label{eqn:upper}
\lambda_i \leq (e+1) (r+1)^i
\end{equation}
for all $(e,r)$-partitions $\lambda_0 + \lambda_1 + \cdots + \lambda_n$. 
Hence if $m = \lambda_0 + \lambda_1 + \cdots + \lambda_n$ is an $(e,r)$-partition then 
the sum of the parts in the partition cannot exceed $\sum_{i=0}^n{(e+1)(r+1)^i} = \frac{(e+1)}{r}((r+1)^{n+1}-1)$. 
That is, 
$$
m \leq \frac{e+1}{r}((r+1)^{n+1}-1) < \frac{e+1}{r}(r+1)^{n+1} 
\, \, \, \, \textup{or} \, \, \, \,   
\textup{log}_{r+1}\left(\frac{rm}{e+1}\right) < n+1.
$$ 
Since $n+1$ is an integer then the integer part of $\textup{log}_{r+1}(\frac{rm}{e+1})$ 
is strictly less than $n+1$
i.e.  $\lfloor \textup{log}_{r+1}(\frac{rm}{e+1}) \rfloor \leq n$. 
This tells us that an $(e,r)$-partition of $m$ must have 
{\em at least} $\lfloor \textup{log}_{r+1}(\frac{rm}{e+1}) \rfloor + 1$ parts. This 
number will suffice.

\begin{proposition} \label{pro:min}
A minimal $(e,r)$-partition of $m$ has exactly $n+1 = \lfloor \textup{log}_{r+1}(\frac{rm}{e+1}) \rfloor + 1$ parts.
\end{proposition}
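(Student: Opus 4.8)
The plan is to supply the matching upper bound: for every $m$ one must exhibit an $(e,r)$-partition of $m$ with exactly $n+1$ parts, where $n := \lfloor \textup{log}_{r+1}(\frac{rm}{e+1}) \rfloor$; together with the lower bound already established above this proves the proposition. By Theorem~\ref{the:iff} it suffices to produce integers $1 \le \lambda_0 \le \lambda_1 \le \cdots \le \lambda_n$ with $\lambda_0 \le e+1$, with $\lambda_i \le (e+1) + r\sum_{j<i}\lambda_j$ for all $i$, and with $\sum_{i=0}^{n}\lambda_i = m$.

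First I would record the numerical inequality $m \le \sum_{i=0}^{n}(e+1)(r+1)^i = \frac{e+1}{r}\big((r+1)^{n+1}-1\big)$: the definition of $n$ gives $m < \frac{e+1}{r}(r+1)^{n+1}$, and since the right-hand side of the displayed equality is an integer (because $(r+1)^{n+1}-1$ is a multiple of $r$) the bound follows --- cleanly when $e < r$, while for $e \ge r$ this inequality has to be carried along as a hypothesis, which is the delicacy flagged by the discussion following Theorem~\ref{the:countMin}. If $n = 0$ the bound reads $m \le e+1$ and the single part $\lambda_0 = m$ suffices, so from here assume $n \ge 1$.

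The construction is a greedy one ``from the top'', most cleanly phrased through the partial sums $S_i := \lambda_0 + \cdots + \lambda_i$ with $S_{-1} := 0$. Put $S_n := m$, define $S_{i-1} := \left\lceil \frac{S_i - (e+1)}{r+1} \right\rceil$ recursively for $i = n, n-1, \ldots, 1$, and set $\lambda_i := S_i - S_{i-1}$. By construction $S_{i-1}$ is the least integer for which $S_i \le (e+1)+(r+1)S_{i-1}$, which is exactly $\textup{ineq}_i$; hence every $\textup{ineq}_i$ holds and $\sum_i \lambda_i = S_n = m$ by telescoping. Three further facts must be checked. (i) $S_0 \le e+1$: I would prove the stronger statement $S_i \le \sum_{j=0}^{i}(e+1)(r+1)^j$ for all $i$ by downward induction --- dividing the right-hand side by $r+1$ after removing the $(e+1)$-term yields exactly the same bound with $i$ lowered by one, and the base case $i=n$ is the inequality of the previous paragraph --- and then read it off at $i=0$. (ii) Every $\lambda_i \ge 1$: it is enough to have $\lambda_0 = S_0 \ge 1$ together with the ordering in (iii), and $S_0 \ge 1$ reduces to $S_1 > e+1$, which comes out of unwinding the recursion against the lower bound $m \ge \frac{e+1}{r}(r+1)^n$ built into the definition of $n$. (iii) $\lambda_0 \le \lambda_1 \le \cdots \le \lambda_n$: writing $a := S_{i+1}$, $b := S_i$, $c := S_{i-1}$ (with $c = 0$ when $i=0$), the ceiling defining $b$ forces $a \ge (b-1)(r+1)+e+2$, so $a + c - 2b \ge (r-1)(b-1)+e \ge 0$ since $b \ge 1$ and $r \ge 2$, and this is precisely $\lambda_{i+1} \ge \lambda_i$. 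Feeding $\lambda_0, \ldots, \lambda_n$ back into Theorem~\ref{the:iff} then certifies an $(e,r)$-partition of $m$ with $n+1$ parts.

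I expect fact (i) to be the main obstacle: unwinding the nested ceilings carefully enough that the estimate lands exactly on $S_0 \le e+1$, and --- relatedly --- pinning down precisely when the input inequality $m \le \frac{e+1}{r}((r+1)^{n+1}-1)$ is guaranteed. Facts (ii) and (iii) are short inductions whose only content is a comparison with the extremal partition $\big((e+1)(r+1)^i\big)_{i=0}^{n}$, and (iii) is the one point where monotonicity of the parts --- not just the inequalities $\textup{ineq}_i$ --- has to be argued on its own.
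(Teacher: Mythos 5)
Your construction is correct but takes a genuinely different route from the paper's. The paper proves the matching upper bound by exhibiting one explicit witness --- the geometric progression $(e+1), (e+1)(r+1), \ldots, (e+1)(r+1)^{n-1}$ followed by the single remainder part $m - \frac{e+1}{r}((r+1)^n - 1)$ --- and verifies the $(e,r)$-property directly on the $r$-cover by a shifted-union argument, not through Theorem~\ref{the:iff}. Your greedy top-down recursion $S_{i-1} = \lceil (S_i - (e+1))/(r+1) \rceil$ produces a (generally different) witness and certifies it through the inequalities $\textup{ineq}_i$. Your three verifications all close: in (i) the downward induction works because $\frac{e+1}{r}((r+1)^{i+1}-1) - (e+1) = (r+1)\cdot\frac{e+1}{r}((r+1)^i-1)$ exactly, so the ceiling costs nothing; for (ii) the companion lower bound $S_i \geq \frac{e+1}{r}((r+1)^i - 1) + 1$ (base case from $m \geq \frac{e+1}{r}(r+1)^n$ plus integrality) propagates downward and gives $S_0 \geq 1$ and $S_1 \geq e+2$; and in (iii) note that keeping the $+c$ term gives $a + c - 2b \geq (r-1)(b-1) + e + c \geq 0$ already for $r \geq 1$, so you do not need $r \geq 2$ (which matters, since the proposition is not restricted to $r \geq 2$). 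The paper's route is shorter because its witness is rigid enough that only the last part needs checking; yours avoids a second pass through the $r$-cover and produces a visibly ordered partition, at the cost of the ceiling bookkeeping.

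The obstacle you single out is real, and it is not a defect of your argument alone: the inequality $m \leq \frac{e+1}{r}((r+1)^{n+1}-1)$ does not follow from $m < \frac{e+1}{r}(r+1)^{n+1}$ when $e+1 > r$, and the paper's own proof silently needs the same strengthening (its shifted-union step requires the remainder part to be at most $(e+1)(r+1)^n$, which is equivalent to it). Indeed the unqualified statement fails there: for $(e,r) = (3,2)$ and $m = 17$ one has $n = \lfloor \log_3(8.5) \rfloor = 1$, yet any two-part $(3,2)$-partition has sum at most $4 + 12 = 16$, so the minimum is three parts, not two (and $1+4+12$ realizes it). So carrying $m \leq \frac{e+1}{r}((r+1)^{n+1}-1)$ as a hypothesis when $e \geq r$ is exactly the right move; when $e < r$ it is automatic by integrality of $\frac{e+1}{r}((r+1)^{n+1}-1)$, as you observe.
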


\begin{proof}
It suffices to show that for any integer $m$ with 
$n:= \lfloor \textup{log}_{r+1}(\frac{rm}{e+1}) \rfloor$ 
the partition whose parts are exactly those in the multiset 
$$
\{\, (e+1), \,(e+1)(r+1), \, (e+1)(r+1)^2, \ldots, (e+1)(r+1)^{n-1}, \,  m - \frac{(e+1)}{r}((r+1)^n - 1) \}
$$
is an $(e,r)$-partition of $m$.

To see this first observe that 
$1 + (r+1) + (r+1)^2 + \cdots + (r+1)^{n-1}$ is a $(0,r)$-partition of $\frac{1}{r}((r+1)^n - 1)$ 
and so $(e+1) + (e+1)(r+1) + (e+1)(r+1)^2 + \cdots + (e+1)(r+1)^{n-1}$ is an $(e,r)$-partition of 
$\frac{(e+1)}{r}((r+1)^n - 1)$. Next, for each $0 \leq \alpha \leq r$ the shifted set 
$$
\alpha \cdot (m - \frac{(e+1)}{r}((r+1)^n - 1)) 
\, + \,
\left\lbrace \sum_{i=0}^{n-1} \alpha_i (e+1)(r+1)^i : \alpha_i \in \{ 0,1,\ldots,r \} \right\rbrace
$$ 
will not omit any consecutive $(e+1)$ integers and, since 
$m < \frac{e+1}{r}(r+1)^{n+1}$, the union over $1 \leq \alpha \leq r$ of these shifted sets 
not only range from $1$ through to $rm$ but do not omit any $(e+1)$ consecutive integers. 
This union, of course, is precisely the $r$-cover of the partition whose parts consist of the 
elements from the multiset. 
\end{proof}

Recall that if $\lambda_0 + \lambda_1 + \cdots + \lambda_n$ is a $(0,r)$-partition of $m$ then 
$(e+1)\lambda_0 + (e+1)\lambda_1 + \cdots + (e+1)\lambda_n$ is a $(e,r)$-partition of $(e+1)m$. 
The above proposition tells us that  more is true: {\em minimality} is also preserved by the 
multiplication of parts by $e+1$. This explains the ``doubling of parts'' in solving the Tanton 
variant of Bachet's problem with $80=2+6+18+54$ from the solution of 
$40= 1+3+9+27$ to the original Bachet problem. 

The classification of minimal $(e,r)$-partitions closely mirrored the development of 
complete partitions by Park \cite{Par} but at this point we depart from the 
analysis of \cite{Par} and R{\o}dseth \cite{RodMpart, Rod} who did not view 
the minimal complete partitions as lattice points in polyhedra, instead using nice 
inductive arguments, correctly 
anticipating what the enumerating function should be and proceeding to prove that 
this was the case.  In the remainder of this article, our treatment here will differ 
significantly, focusing instead on the geometric character of the relaxed complete 
partitions.

\section{A primer on Brion's Theorem} \label{sec:brion}

The remarkable formula of Brion \cite{Bri} states that 
the lattice points in $\PP$ are encoded precisely by the monomials appearing 
in the sum of generating functions for the vertex cones. In Section~\ref{sec:enumerate} 
we will enumerate the minimal relaxed complete partitions using Brion's formula. In order 
to make this article self contained we will give a brief description here of Brion's 
formula, in a restrictive manner that is best suited to our needs. 

Given an integer vector ${\bf u}$ define the {\em primitive part of ${\bf u}$} 
as $\textup{p}({\bf u}) := \frac{1}{\textup{gcd}({\bf u})} {\bf u}$, where 
the $\textup{gcd}({\bf u})$ is the greatest common divisor of the entries of ${\bf u}$.  
Call ${\bf u}$ {\em primitive} if $\textup{gcd}({\bf u})$ equals $1$. 
Given two integer vectors ${\bf u}$ and ${\bf u}^\prime$ we will 
denote $\textup{p}({\bf u}^\prime - {\bf u})$ by 
$\textup{p}_{\bf u}({\bf u}^\prime)$, 
as in the {\em primitive vector of ${\bf u}^\prime$ relative to ${\bf u}$}. 
Note that $\textup{p}_{\bf u}({\bf u}^\prime) = - \textup{p}_{{\bf u}^\prime}({\bf u})$
Given a vertex ${\bf v}$ of a bounded integer polyhedron $\PP$, the 
{\em vertex cone of $\PP$ at ${\bf v}$} is the smallest cone with 
apex ${\bf v}$ that contains $\PP$. 
Letting $\NN({\bf v})$ denote the neighbors of ${\bf v}$ in $\PP$, we can 
write the vertex cone at ${\bf v}$ as 
$$
\KK_{\bf v} := {\bf v} + \R_{\geq 0} \{ \textup{p}_{\bf v}({\bf v}^\prime) \,
: \, {\bf v}^\prime \in \NN({\bf v})\} 
$$
where the latter term is understood as the non-negative real span of the set of
primitive vectors 
of the neighbors of ${\bf v}$ relative to ${\bf v}$ itself.  
The polyhedron $\PP$ is said to {\em simple} if the generators 
$
\{ \textup{p}_{\bf v}({\bf v}^\prime) \,
: \, {\bf v}^\prime \in \NN({\bf v})\}
$ 
of each vertex cone form a linearly independent set. 
Assuming that the polyhedron $\PP$ is full dimensional we say that a vertex cone 
$\KK_{\bf v}$ is {\em unimodular} if the square matrix whose rows equal 
$\{ \textup{p}_{\bf v}({\bf v}^\prime) \, : \, {\bf v}^\prime \in \NN({\bf
v})\}$ has determinant $\pm 1$. 
This means that every integer point that lies in the vertex cone $\KK_{\bf v}$ 
can be written uniquely as the apex plus a non-negative 
integer combination of the primitive vectors that generate the cone. In this
case the set 
of integer points in a vertex cone $\KK_{\bf v}$ of a simple polyhedron $\PP
\subseteq \R^{n+1}$ 
can be encoded by a sum of monomials which can be 
written as a generating function (where ${\bf z}^{\bf u} := z_0^{u_0}z_1^{u_1}
\cdots z_n^{u_n}$): 
$$
\sigma_{\KK_{\bf v}}({\bf z}) 
\, := \, 
\sum_{{\bf m} \in \KK_{\bf v}\cap \N^{n+1}} {\bf z}^{\bf m}
\, = \, 
{\bf z}^{\bf v} \prod_{{\bf v}^\prime \in \NN({\bf v})} \frac{1}{1 - {\bf
z}^{\textup{p}_{\bf v}({\bf v}^\prime)}}.
$$
The remarkable formula of Brion~\cite{Bri}, specialized here for the
bounded, integer and simple polyhedron $\PP$ with all its 
vertex cones being unimodular, states that the lattice points in $\PP$ are encoded
precisely by the monomials appearing 
in the sum of generating functions for the vertex cones:
\begin{equation} \label{eqn:BrionMulti}
\sigma_{\PP}({\bf z}) 
\, = \, 
\sum_{{\bf v} \, \textup{a vertex of} \, \PP} 
\sigma_{\KK_{\bf v}}({\bf z})
\, = \, 
\sum_{{\bf v} \, \textup{a vertex of} \, \PP} 
{\bf z}^{\bf v} \prod_{{\bf v}^\prime \in \NN({\bf v})} \frac{1}{1 - {\bf
z}^{\textup{p}_{\bf v}({\bf v}^\prime)}}.
\end{equation}
Brion's formula holds too when the vertex cones of the simple polyhedron are not
unimodular but this requires of a more 
detailed description which we do not need to call upon here. 
Note that to enumerate all lattice points with common
$1$-norm (denoted by $|\cdot|$) it suffices to
set the variables $z_0,z_1,\ldots,z_n$ in (\ref{eqn:BrionMulti}) to a common
variable $x$ to yield: 
\begin{equation} \label{eqn:Brion}
\sigma_{\PP}(x) 
\, = \, 
\sum_{\bf v} 
\sigma_{\KK_{\bf v}}(x) 
\, = \, 
\sum_{\bf v} 
x^{|{\bf v}|} \prod_{{\bf v}^\prime \in \NN({\bf v})} 
\frac{1}{1 - x^{|{\textup{p}_{\bf v}({\bf v}^\prime)}|} }.
\end{equation}
where the sum is taken over all ${\bf v}$ that are vertices of $\PP$. 

For a given fixed ${\bf v}$ if $|{\textup{p}_{\bf v}({\bf v}^\prime)}| < 0$ then 
$
\displaystyle 
\frac{1}{1 - x^{|{\textup{p}_{\bf v}({\bf v}^\prime)}|} } 
\, = \, 
\frac{- x^{|{\textup{p}_{\bf v^\prime}({\bf v})}|}}{1 - x^{|{\textup{p}_{\bf v^\prime}({\bf v})}|} } 
$ 
and so this means that the vertex cone 
\begin{equation} \label{eqn:Flip}
\sigma_{\KK_{\bf v}}(x) 
\, = \, 
x^{|{\bf v}|} 
\prod_{{\bf v}^\prime \, : \, |{\textup{p}_{\bf v}({\bf v}^\prime)}| < 0 }
-x^{|{\textup{p}_{\bf v^\prime}({\bf v})}|}
\prod_{{\bf v}^\prime \, : \, |{\textup{p}_{\bf v}({\bf v}^\prime)}| < 0} 
\frac{1}{1 - x^{|\textup{p}_{{\bf v}^\prime}({\bf v})|}} 
\prod_{{\bf v}^\prime \, : \, |{\textup{p}_{\bf v}({\bf v}^\prime)}| > 0} 
\frac{1}{1 - x^{|\textup{p}_{\bf v}({\bf v}^\prime)|}}.
\end{equation}
where each product is taken over ${\bf v}^\prime \in \NN({\bf v})$. 
Written in this form, every monomial in the vertex cone's generating 
function $\sigma_{\KK_{\bf v}}(x)$ has degree at least 
$
O(\KK_{\bf v}) : = 
|{\bf v}| + 
\sum_{ \substack{{\bf v}^\prime \, : \,  |{\textup{p}_{\bf v}({\bf v}^\prime)}| < 0 }}  
|{\textup{p}_{\bf v^\prime}({\bf v})}|
$ 
and we call this quantity the {\em order} of the vertex cone. Permitting an abuse of notation, 
we will sometimes write $O({\bf v})$ for the order of the vertex cone whose apex is ${\bf v}$ in the 
polyhedron $\PP$.

\section{Enumerating minimal relaxed complete partitions} \label{sec:enumerate}

An alternative classification of the minimal relaxed complete partitions are as follows: 
the $(e,r)$-partitions 
with $n+1$ parts are precisely the positive integer points 
$\lambda = (\lambda_0, \lambda_1, \ldots, \lambda_n)$ that satisfy the linear inequalities 
$\lambda_0 \leq \lambda_1 \leq \cdots \leq \lambda_n$, the inequalities 
$\textup{ineq}_i \, : \, \lambda_i \leq (e+1) + r \sum_{j=0}^{i-1}{\lambda_j}$ for each $i \leq n$. 
Furthermore, the partitions with $1$-norm 
$|\lambda| := \lambda_0 + \lambda_1 + \cdots + \lambda_n$ lying in the interval 
$ [(e+1)\frac{(r+1)^{n}-1}{r}+1, (e+1)\frac{(r+1)^{n+1}-1}{r}]$ are the minimal such partitions. 

Rather than computing the enumerating function of the minimal $(e,r)$-partitions 
directly, we will instead describe another collection of 
lattice points with an enumerating function that is equivalent to that of the minimal $(e,r)$-partitions. 
Set 
$$
\mu_i := (e+1)(r+1)^i - \lambda_i 
$$ 
for each $i=0,1,\ldots,n$. Note that $|\mu| = \frac{e+1}{r}((r+1)^{n+1}-1) - |\lambda|$ 
and that the constraints on the $\lambda_i$'s from Theorem~\ref{the:iff} translate to  
$$
0 \leq \mu_0 \leq e 
\hspace{.5in} \textup{and} \hspace{.5in} 
r \sum_{j = 0}^{i-1} \mu_j \leq \mu_i \leq (e+1)(r)(r+1)^{i-1} + \mu_{i-1}.
$$
Hence we have a one-to-one correspondence between the $(e,r)$-partitions 
$\lambda = (\lambda_0, \ldots, \lambda_n)$ with 
$ |\lambda| \in [(e+1)\frac{(r+1)^{n}-1}{r}+1, (e+1)\frac{(r+1)^{n+1}-1}{r}]$
and the non-negative integer vectors $\mu = (\mu_0, \ldots,\mu_n)$ with 
$|\mu| \in [0,(e+1)(r+1)^n-1]$ that satisfy the above inequalities. Furthermore, 
since $|\mu| = \frac{e+1}{r}((r+1)^{n+1}-1) - |\lambda|$ the number of 
minimal $(e,r)$-partitions of $|\lambda|$ equals the number of $\mu$ vectors 
with $n+1$ parts with $|\mu| = \frac{e+1}{r}((r+1)^{n+1}-1) - |\lambda|$. 
In summary, it will suffice to find a generating function for the norms of the transformed 
$\mu$ vectors, $\sum x^{|\mu|} + O(x^{(e+1)(r+1)^n})$ 
where the sum is taken over all vectors $\mu$ of length $n+1$. 

In light of the above  we wish 
to study the lattice points $\mu = (\mu_0, \mu_1, \ldots, \mu_n)$ that 
for every $i=1,2,\ldots,n$ live in the halfspaces 
$$
\textup{min}^+(i) \, := \, 
\{ \mu \in \R^{n+1} \, : \, 
r \sum_{j = 0}^{i-1} \mu_j \leq \mu_i
\}
\hspace{.15in} \textup{and} \hspace{.15in} 
\textup{max}^+(i) \, := \, 
\{ \mu \in \R^{n+1} \, : \, 
\mu_i \leq (e+1)(r)(r+1)^{i-1} + \mu_{i-1}
\}, 
$$
in addition to 
$
\textup{min}^+(0) \, := \, 
\{ \mu \in \R^{n+1} \, : \, 0 \leq \mu_0 \}
$
and 
$
\textup{max}^+(0) \, := \, 
\{ \mu \in \R^{n+1} \, : \, \mu_0 \leq e \}.
$
For every $i = 0,1,\ldots,n$ let $\textup{min}(i)$ and $\textup{max}(i)$ denote
the 
defining hyperplanes of the above, that is with every occurrence of
$\leq$ replaced 
by equality in $\textup{min}^+(i)$ and $\textup{max}^+(i)$ respectively. 

We define the polyhedron $P_n(e,r)$ as the common intersection of the above
$\mu$ halfspaces. We claim that $P_n(e,r)$ is a bounded integer polyhedron that is simple 
and whose vertex cones are unimodular and so Brion's formula as stated in 
Section~\ref{sec:brion} can be applied to enumerating its lattice points. We will then 
explicitly calculate the vertices of $P_n(e,r)$ and the primitive vectors that generate each 
of $P_n(e,r)$'s vertex cones. 

\begin{remark}
Note that when $e=0$ we have $\textup{min}^+(0) \cap \textup{max}^+(0)$ equals
the hyperplane defined by $\mu_0 = 0$. 
In the next proposition (and in some later results) we implicitly assume that $e \geq 1$; 
for $e=0$ the statement is exactly the same except that the polyhedron is of dimension 
$n$ instead of $n+1$. Similar minor omissions apply to later claims. 
\end{remark}

Given a set $\Set \subseteq \{ 0,1,2,\ldots,n \}$ define the set of vectors 
$
\mu(\Set) \, := \, 
\{ \mu \in \textup{max}(i) \, : \, i \in \Set \} 
\, \cap \, 
\{ \mu \in \textup{min}(i) \, : \, i \notin \Set \}. 
$
These sets consist of single elements and are precisely the vertices of $P_n(e,r)$.

\begin{proposition} \label{pro:type}
The polyhedron $P_n(e,r)$ is an $(n+1)$-dimensional simple polyhedron. 
The vertices of $P_n(e,r)$ are precisely $\{ \mu(\Set) \, : \, \Set \subseteq \{
0,1,2,\ldots,n \} \}$. 
Two vertices $\mu(\Set)$ and $\mu(\TT)$ share an edge if and only if their
symmetric difference $\Set \triangle \TT$ is a singleton. 
\end{proposition}

\begin{proof}
It is straightforward to check that for any $i=0,1,2,\ldots,n$ there is no point 
$\mu \in P_n(e,r)$ that simultaneously satisfies both $\textup{max}(i)$ and
$\textup{min}(i)$.
Next, the normals of the hyperplanes that define $\mu(\Set)$ are a set of
linearly independent 
vectors since each $\textup{min/max}(i)$ sequentially introduces a new
independent variable $\mu_i$. 
Thus each $\mu(\Set)$ equals the 
intersection of $n+1$ affinely independent hyperplanes in $\R^{n+1}$. 
While no other defining hyperplane of $P_n(e,r)$ can contain $\mu(\Set)$, it is
nonetheless contained 
in the interior of the other defining halfspaces of $P_n(e,r)$. Thus, for every 
$\Set \subseteq \{ 0,1,2,\ldots,n \}$, $\mu(\Set)$ must be a vertex of
$P_n(e,r)$ and 
furthermore it is defined by $n+1$ halfspaces and equivalently must have $n+1$
edges. 
That is,  $P_n(e,r)$ is an $(n+1)$-dimensional simple and bounded polyhedron. 

As for the edges, $|\Set \triangle \TT| = 1$ if and only if $\mu(\Set)$ and 
$\mu(\TT)$ have precisely $n$ out of each of their $n+1$ defining hyperplanes 
in common and, since $P_n(e,r)$ is simple, this is 
equivalent to $\mu(S)$ and $\mu(\TT)$ being neighbors in $P_n(e,r)$. 
\end{proof}

In order to apply Brion's formula to $P_n(e,r)$ we also need that the polyhedron $P_n(e,r)$ 
have vertex cones that are all unimodular. 

\begin{lemma} \label{lem:unimodCones}
The vertex cones of $P_n(e,r)$ are unimodular.
\end{lemma}
\begin{proof} 
By Proposition~\ref{pro:type} we need only consider pairs of sets $\Set$ and
$\TT$ 
where $\Set \triangle \TT = \{ j \}$. For now let us assume that  $\TT = \Set
\cup \{ j \}$
and we will show that 
${\bf p} := \textup{p}_{\mu(\Set)}(\mu(\TT))$ equals $0$ in entries $0$
through $j-1$ 
and $1$ in entry $j$. Since $\textup{p}_{\mu(\TT)}(\mu(\Set)) = -
\textup{p}_{\mu(\Set)}(\mu(\TT))$ 
the case of $\Set = \TT \cup \{ j \}$ follows in exactly the same fashion but
with $-1$ in entry $j$. 
Consequently, the primitive vectors of the vertex cone of any given vertex 
of $P_n(e,r)$ form the rows of a square matrix with determinant $\pm 1$. 

For every $i < j $ we have that $\mu(\Set)_j = \mu(\TT)_j$ since the sets $\Set$
and $\TT$ 
agree when restricted to the support of $\{ 0,1,2,\ldots, j-1\}$. Hence 
$\textup{p}_{\mu(\Set)}(\mu(\TT))_i = 0$ for $i < j$. The $j$-th entry of the
difference 
vector is  
$
\mu(\TT)_j - \mu(\Set)_j  = r(e+1)(r+1)^{j-1} + \mu(\TT)_{j-1} - r
(\sum_{k=0}^{j-1}{\mu(\Set)_k}) 
$ which is a positive integer -- this difference must be positive since it is 
exactly $r(\lambda_0+\cdots+\lambda_{j-1}) - \lambda_{j-1}$ in terms of the original 
$\lambda$-formulation of the $(e,r)$-partition. 

For the $i = j+1,\ldots ,n$, recalling that $\Set$ and $\TT$ agree outside of
$\{ j \}$, the 
$i$-th entry of the difference equals 
\begin{equation} \label{eqn:diff}
\mu(\TT)_i - \mu(\Set)_i
= \left\lbrace 
\begin{array}{lcc} 
\mu(\TT)_{i-1} - \mu(\Set)_{i-1} & : & i \in \TT \\
r(\sum_{k=j}^{i-1}({\mu(\TT)_ k - \mu(\Set)_k})) & : & i \notin \TT
\end{array} \right.
\end{equation}

Either way, each $\mu(\TT)_{i} - \mu(\Set)_{i}$ is recursively defined from 
$\{ \mu(\TT)_ k - \mu(\Set)_k : k=j,\ldots, i-1 \}$ 
and each such difference is a multiple of $\mu(\TT)_{j} - \mu(\Set)_{j}$. 
Hence, $\mu(\TT)_{j} - \mu(\Set)_{j}$ equals 
$\textup{gcd}(\mu(\TT) - \mu(\Set))$ 
and so $\textup{p}_{\mu(\Set)}(\mu(\TT))_j = 1$. 
\end{proof}

Applying Brion's formula from Equation~(\ref{eqn:Flip}) requires explicit calculation of 
the $1$-norms of the vertices of $P_n(e,r)$ and their vertex cones. Since we are only interested 
in the lattice points of $P_n(e,r)$ that have $1$-norm less than $(e+1)(r+1)^n$ then we can 
eliminate all vertex cones that have order greater than or equal to $(e+1)(r+1)^n$.

We begin by calculating $|\mu(\Set)|$ for each $\Set \subseteq \{ 0,1,2,\ldots,n \}$. 
We write $\mu$ as shorthand for $\mu(\Set)$. If $k <  k' - 1$ then we write $|\mu|_k^{k'-1}$ for
$\mu_k + \mu_{k+1} + \cdots + \mu_{k' - 1}$. We denote $|\mu|_0^{k'-1}$ as simply 
$|\mu|^{k'-1}$, $|\mu|_k^{n}$ as simply $|\mu|_k$ and, as before, $|\mu|_0^{n}$ as simply $|\mu|$. 

\begin{lemma} \label{lem:runs}
\noindent \textup{\bf (a)} If $k, \ldots, k'-1 \notin \Set$ 
then 
$$ 
\displaystyle |\mu|_k^{k'-1} =  ((r+1)^{k' - k} -1) |\mu|^{k-1} 
\hspace{.2in} 
\textup{and} 
\hspace{.2in} 
|\mu|^{k'-1} =  (r+1)^{k' - k} |\mu|^{k-1}.
$$


\noindent \textup{\bf (b)} If $k \notin \Set$ then $\displaystyle \mu_k = \frac{r}{r+1}|\mu|^k$. 

\noindent \textup{\bf (c)} If $0 < k, \ldots, k'-1 \in \Set$ then 
$
\displaystyle |\mu|_k^{k'-1} =  
\frac{(e+1)(r+1)^k}{r} [(r+1)^{k' - k} - 1 - (k' - k) \frac{r}{r+1}] + (k' - k) \mu_{k-1} .
$

\noindent \textup{\bf (d)} If $0, \ldots, k'-1 \in \Set$ then 
$
\displaystyle |\mu|_0^{k'-1} =  
\frac{(e+1)}{r}[(r+1)^{k'} - 1 - k' \frac{r}{e+1}].
$
\end{lemma}
\begin{proof}
{\bf (a)} Since $k \notin \Set$ then $\mu_k = r |\mu|^{k-1}$ and, since $k+1
\notin \Set$ then 
$\mu_{k+1} = r |\mu|^{k} = r (|\mu|^{k-1} + \mu_k) = r (|\mu|^{k-1} + r
|\mu|^{k-1}) = r (r+1) |\mu|^{k-1}$. 
Recursively we have $\mu_{k+i} = r (r+1)^i |\mu|^{k-1}$, hence 
$$
|\mu|_k^{k'-1} = r \sum_{i = 0}^{k'-1-k}(r+1)^i |\mu|^{k-1} = ((r+1)^{k' - k} - 1) |\mu|^{k-1} 
$$
and, by definition, this implies that $|\mu|^{k'-1} = |\mu|_k^{k'-1} + |\mu|^{k-1}$.

\noindent {\bf (b)} Since $|\mu|^j = \mu_j + |\mu|^{j-1}$ and $j \notin \Set$ 
then $\frac{1}{r} \mu_j = |\mu|^{j-1}$ implying $|\mu|^j = (1 + \frac{1}{r})\mu_j$. 

\noindent {\bf (c)}  
Since $k \in \Set$ then $\mu_k = r(e+1)(r+1)^{k-1} + \mu_{k-1}$ and, since $k+1
\in \Set$ then 
$\mu_{k+1} = r(e+1)(r+1)^{k} + \mu_{k} = r(e+1)(r+1)^{k-1}[(r+1)+1] +
\mu_{k-1}$. 
Recursively, for every $i=0, 1, \ldots, (k'-1)-k$ we have 
$$
\mu_{k+i} 
\, = \, 
r (e+1)(r+1)^{k-1} [(r+1)^i + (r+1)^{i-1} + \cdots + (r+1) + 1] + \mu_{k-1} 
\, = \, 
(e+1)(r+1)^{k-1} [(r+1)^{i+1} - 1] + \mu_{k-1} 
$$ 
and so 
$
|\mu|_k^{k'-1}  \, =  \, 
(e+1)(r+1)^{k-1} \sum_{i = 0}^{k'-1-k} [(r+1)^{j+1} - 1] +   \sum_{i = 0}^{k'-1-k} \mu_{k-1},
$ or 
$$
|\mu|_k^{k'-1} \, = \,  
 (e+1)(r+1)^{k - 1} [ \frac{(r+1)}{r} ( (r+1)^{k' - k} - 1 ) - (k' - k) ] 
 + (k' - k) \mu_{k-1} 
$$
and is easily rewritten in the desired form. 

\noindent {\bf (d)} follows in the same manner as {\bf (c)}, with the initial condition of $\mu_0 = e$. 
\end{proof}

If $(\alpha_i, n_i) := \{ \alpha_i, \alpha_i+1, \ldots, \alpha_i + n_i -1 \} \subseteq \Set$ then 
$(\alpha_i, n_i)$ is said to be a {\em chain in $\Set$}; it is said to be {\em maximal} if no other 
chain in $\Set$ properly contains $(\alpha_i, n_i)$. We can write a set $\Set$ in terms of the union of 
its maximal chains vis-a-vis $\Set = \{ (\alpha_1,n_1), (\alpha_2,n_2), \ldots, (\alpha_p,n_p)\}$.  
Since each of the chains are maximal then each $\alpha_i + n_i < \alpha_{i+1}$ and that 
$|\Set| = n_1+n_2+\cdots+n_p$. The following result provides us with the 1-norm of any vertex $\mu(\Set)$ 
in terms of the maximal chain description of $\Set$. 

\begin{lemma}\label{lem:gencase} 
For $s(\Set) = \{(\alpha_1,n_1),\ldots, (\alpha_p,n_p)\}$ with $\alpha_1>0$
\begin{equation} \label{eqn:non-zero-vertex-sum}
|\mu(\Set)| = \frac{(e+1)(r+1)^{n + 1 - |\Set|}}{r}
\Bigg[ 
(r+1)^{|\Set|}   
 - \prod_{i=1}^p \left(1+ \frac{r n_i}{r+1} \right)
\Bigg] . 
\end{equation}

For $s(\Set) = \{(0,n_1),\ldots, (\alpha_p,n_p)\}$   
\begin{equation}
|\mu(\Set)| = 
\frac{(e+1)(r+1)^{n + 1 - |\Set|}}{r}
\Bigg[
(r+1)^{|\Set|} 
 - \left(1 + \frac{n_1 r}{e+1}\right)
 \prod_{i=2}^p\left(1+ \frac{r n_i}{r+1}\right)
  \Bigg].
\end{equation}
\end{lemma}

\begin{proof} 
We prove both cases by induction on the number of maximal chains in $\Set$. 
For the base case we have $0 \notin \Set$ (i.e. $\alpha_1 > 0$) and $\Set = \{(\alpha_1,n_1)\}$. 
By setting $k = \alpha_1+n_1$ and $k'- 1 = n$ in Lemma~\ref{lem:runs}(a) the vertex $\mu$ for $\Set$ 
has norm given by 
$
|\mu| 
\, = \,  
(r+1)^{n+1-\alpha_1 - n_1}|\mu|^{\alpha_1+n_1-1}_{\alpha_1}.
$
The quantity $|\mu|^{\alpha_1+n_1-1}_{\alpha_1}$ can be computed from setting 
$k = \alpha_1$ and $k' = \alpha_1 + n_1$ in Lemma~\ref{lem:runs}(c) 
which verifies the base case when $\Set = \{ (\alpha_1, n_1)\}$ and $\alpha_1 > 0$: 
$$
|\mu| = (r+1)^{n+1-\alpha_1-n_1}|\mu|^{\alpha_1+n_1-1}_{\alpha_1} 
= \frac{(e+1)(r+1)^{(n+1- \alpha_1 - n_1) + \alpha_1}}{r} [(r+1)^{n_1} - 1 - (n_1) \frac{r}{r+1}] + (n_1)0 
$$
The base case when $0 \in \Set$ is similarly verified. 

Let us assume the induction hypothesis, that the claim is true for any set $\Set'$ with at most $p-1$ chains in it. 
Take a set $\Set =  \{(\alpha_1,n_1), \ldots, (\alpha_p, n_p)\}$ as a subset of $\{ 0,1,2,\ldots,n \}$.  
Since $k := \alpha_p + n_p, \ldots, k'-1 := n$ are not in $\Set$ then $1$-norm of the vertex for $\Set$ is given by 
$$
|\mu| 
= 
|\mu|^{\alpha_p + n_p -1} + |\mu|_{\alpha_p + n_p}^n 
=
(r+1)^{n+1-(\alpha_p + n_p)} |\mu|^{\alpha_p + n_p -1}
=
(r+1)^{n+1-(\alpha_p + n_p)} (|\mu|^{\alpha_p -1} + |\mu|_{\alpha_p}^{\alpha_p + n_p -1}) 
$$
Lemma~\ref{lem:runs}(c) with $k = \alpha_p$ and $k' = \alpha_p+n_p$ applies to $|\mu|_{\alpha_p}^{\alpha_p + n_p -1}$ 
to yield 
$$
|\mu|_{\alpha_p}^{\alpha_p + n_p -1} 
\, = \, 
\frac{(e+1)(r+1)^{\alpha_p + n_p}}{r}
\, - \, 
\frac{(e+1)(r+1)^{\alpha_p}}{r}
\, - \, 
n_p (e+1)(r+1)^{\alpha_p-1}
\, + \, 
n_p \frac{r}{r+1} |\mu|^{\alpha_p-1}
$$ 
so 
\begin{equation} \label{eqn:crank1}
|\mu|^{\alpha_p -1} + |\mu|_{\alpha_p}^{\alpha_p + n_p -1} 
\, = \, 
\frac{(e+1)(r+1)^{\alpha_p + n_p}}{r}
\, - \, 
\frac{(e+1)(r+1)^{\alpha_p}}{r}
\, - \, 
n_p (e+1)(r+1)^{\alpha_p-1}
\, + \, 
\Bigg( 1 + n_p \frac{r}{r+1} \Bigg) |\mu|^{\alpha_p-1}
\end{equation}
With $\Set' = \{(\alpha_1,n_1), \ldots, (\alpha_{p-1}, n_{p-1})\}$ viewed as a subset of $\{ 0,1,2,\ldots,\alpha_p - 1 \}$ 
the inductive hypothesis tells us that 
\begin{equation} \label{eqn:crank2}
|\mu|^{\alpha_p-1} = 
\frac{(e+1)(r+1)^{\alpha_p}}{r} - 
\frac{(e+1)(r+1)^{\alpha_p - |\Set'|}}{r}  
\prod_{i=1}^{p-1}\left(1+ \frac{r n_i}{r+1}\right)
\end{equation}
Substituting (\ref{eqn:crank2}) into (\ref{eqn:crank1}) 
\begin{align*}
|\mu|^{\alpha_p -1} + |\mu|_{\alpha_p}^{\alpha_p + n_p -1}
\, = \, 
\frac{(e+1)(r+1)^{\alpha_p + n_p}}{r}
\, - \, 
\frac{(e+1)(r+1)^{\alpha_p}}{r}
\, - \, 
n_p (e+1)(r+1)^{\alpha_p-1} & \\  
\, + \,
\left( 1 + n_p \frac{r}{r+1} \right) \frac{(e+1)(r+1)^{\alpha_p}}{r} 
- 
\left( 1 + n_p \frac{r}{r+1} \right) \frac{(e+1)(r+1)^{\alpha_p - |\Set'|}}{r} 
\prod_{i=1}^{p-1}\left(1+ \frac{r n_i}{r+1}\right)
 & \\ 
\end{align*}
with the second and third terms combined canceling with the fourth we have 
$$
|\mu|^{\alpha_p -1} + |\mu|_{\alpha_p}^{\alpha_p + n_p -1}
\, = \, 
\frac{(e+1)(r+1)^{\alpha_p + n_p}}{r} 
\, - \, 
\frac{(e+1)(r+1)^{\alpha_p - |\Set'|}}{r} 
\left( 1 + n_p \frac{r}{r+1} \right) 
\prod_{i=1}^{p-1}\left(1+ \frac{r n_i}{r+1}\right)
$$ 
and that last product is precisely $\prod_{i=1}^{p}\left(1+ \frac{r n_i}{r+1}\right)$. 
Finally, 
\begin{align*}
|\mu| &= 
(r+1)^{n+1-\alpha_p - n_p}(|\mu|^{\alpha_p -1} + |\mu|_{\alpha_p}^{\alpha_p + n_p -1})\\
	&=
\frac{(e+1)(r+1)^{n+1}}{r} -
\frac{(e+1)(r+1)^{n+1 - |\Set'| - n_p}}{r} \prod_{i=1}^{p}\left(1+ \frac{r n_i}{r+1}\right)
\end{align*}
as claimed. The case where $\alpha_1=0$ is proved in much the same way as the $\alpha_1 > 0$ case. 
\end{proof}

Given an arbitrary $\Set = \{(\alpha_1,n_1),\ldots, (\alpha_p,n_p)\}$ 
the previous lemma, with the exception of whether $\alpha_1$ equals $0$ or not, makes no reference 
to the values of the $\alpha_i$'s and we can thus compute the norms of all vertices of $P_n(e,r)$ with relative 
simplicity. Next are the only cases of $|\mu(\Set)|$ that we'll effectively need to compute our enumerating 
function for the lattice points of $P_n(e,r)$.  

\begin{corollary}\label{cor:mucases} For $\Set \subseteq \{0, \dots, n\}$: 
\textup{\bf (a)} If $\Set =\{j\}$ then 
$|\mu| = 
\left\lbrace 
\begin{array}{rcc}
(e)(r+1)^{n} & : &  j = 0 \\
r(e+1)(r+1)^{n-1} & : & j > 0 
\end{array} 
\right\rbrace .
$
\medskip

\noindent \textup{\bf (b)}  If $\Set = \{j_1,j_2\}$ then 
\medskip
$
|\mu|
= 
\left\lbrace 
\begin{array}{rcc}
 (e+1)(r+1)^{n} + \left[e-1\right](r+1)^{n-1} & : &  0=j_1=j_2 -1 \\
 (e+1)(r+1)^{n} + \left[r(e-1)-1)\right](r+1)^{n-2}& : & 0=j_1< j_2 -1 \\
(e+1)(r+1)^{n}+\left[r-1\right](e+1)(r+1)^{n-2} & : & 0 < j_1 = j_2 - 1 \\
(e+1)(r+1)^{n}+\left[r(r-1)-1\right](e+1)(r+1)^{n-3} & : & 0 < j_1 < j_2 - 1   
\end{array} 
\right\rbrace .
$

\medskip
\noindent \textup{\bf (c)} If $\Set = \{ j_1,j_2,j_3\}$ with $r=1$ then 
$
|\mu|
= 
\left\lbrace 
\begin{array}{rcc}
 (e+1)2^n + (7e -2) 2^{n-4} & : & 0 = j_1 < j_2-1 < j_3 -2  \\
 (e+1)2^n + (5e -1) 2^{n-3} & : & 0 = j_1 = j_2-1 < j_3 -2\\
 (e+1)2^{n}+ 7(e+1)2^{n-5} & : & 0 < j_1 < j_2-1 <  j_3 -2
 \end{array} 
\right\rbrace .
 $
\end{corollary}

\begin{proof} These are only special cases of Lemma~\ref{lem:gencase}. We prove 
part \textup{\bf (b)} when $0 < j_1 < j_2 - 1$ as an illustration. 
Lemma~\ref{lem:gencase} for $\Set = \{ (\alpha_1,n_1), (\alpha_2,n_2) \}$ with $\alpha_1>0$ 
and $n_1=n_2=1$ yields
\begin{align*}
|\mu| &= \frac{(e+1)(r+1)^{n+1-2}}{r}\left[(r+1)^2 - \left(1 + \frac{r}{(r+1)}\right)^2\right] 
	= \frac{(e+1)(r+1)^{n-1}}{r}\left[r^2 + 2r - \frac{2r}{(r+1)}-\frac{r^2}{(r+1)^2}\right] \\
	&= (e+1)(r+1)^{n-1}\left[r + 2 - \frac{2}{(r+1)}-\frac{r}{(r+1)^2}\right] 
	= (e+1)(r+1)^{n-1}\left[\frac{(r+2)(r+1)^2 - 2(r+1) - r}{(r+1)^2}  \right] \\
	&= (e+1)(r+1)^{n-3}\left[(r+1)^3 + (r+1)^2 - 2(r+1) - r\right]
	= (e+1)(r+1)^{n-3}\left[(r+1)^3 + r(r-1)-1\right]
\end{align*} 
which equals $(e+1)(r+1)^{n} + [r(r-1)-1](e+1)(r+1)^{n-3}$.
\end{proof}

Next we compute the primitive vectors associated with the vertices of $P_n(e,r)$ corresponding 
to $|\Set|\leq 2$. One straightforward consequence of Equation~(\ref{eqn:diff}) of Lemma~\ref{lem:unimodCones} 
(with the greatest common divisor term removed) is that we can describe primitive vectors recursively.  
Suppose $\TT = \Set \cup \{ j \}$. Then 
\begin{equation} \label{eqn:Prim-Vector}
\textup{p}_{\mu(\Set)}(\mu(\TT))_i = 
\left\lbrace 
\begin{array}{rcl}
0 & : &  i < j \\
1 & : &  i = j \\
\textup{p}_{\mu(\Set)}(\mu(\TT))_{i-1} & : &  i > j, \, i \in \TT \\
r \sum_{k=j}^{i-1} \textup{p}_{\mu(\Set)}(\mu(\TT))_{k} & : &  i > j, \, i \notin \TT 
\end{array} 
\right\rbrace
\end{equation}
This allows us to compute the $1$-norms of primitive vectors. In particular, when $\TT$ has one 
or two elements we get the following: 
\begin{corollary}\label{cor:primvecs} 
The primitive vectors associated with the vertex $\mu(\emptyset)$ are 
\begin{equation}
|\textup{p}_{\mu(\{ \emptyset \})}(\mu(\{ j \}))| = (r+1)^{n-j} \hspace{.1in} \textup{for each} \hspace{.1in} j=0,1,\ldots,n. 
\end{equation}
Those associated with the vertex $|\mu(\Set)|$ of $\Set=\{j_1,j_2\}$ where $j_1<j_2$ are 
\begin{equation} 
|\textup{p}_{\mu(\{k\})}(\mu(\Set))| = 
\left\lbrace 
\begin{array}{rcl}
(r+1)^{n-j_2} & : &   k =  j_1  \\ 
2(r+1)^{n-j_1-1} & : &  k = j_2 \, \textup{and } \, j_1 =  j_2-1 \\   
(2r+1)(r+1)^{n-j_1-2} & : & k = j_2 \, \textup{and } \, j_1 <  j_2-1    
\end{array} 
\right\rbrace
\end{equation}
If $\Set = \{j_1,j_2\}$ with $j_1<j_2-1$ then 
\begin{equation} 
|\textup{p}_{\mu(\Set)}(\mu(\Set \cup \{k\}))| = 
\left\lbrace 
\begin{array}{rcl}
(2r+1)^2(r+1)^{n-k-4} & : &  k < j_1-1 \\
2(2r+1)(r+1)^{n-k-3} & : &  k = j_1-1\\
(2r+1)(r+1)^{n-k-2} & : &  k < j_2 -1\\
2(r+1)^{n-k-1} & : & k = j_2 -1\\
(r+1)^{n-k} & : & k> j_2
\end{array} 
\right\rbrace
\end{equation}

\end{corollary}
\begin{proof}
In each case, the vectors $\mu(\TT)  -\mu(\Set)$ can be written from Equation~(\ref{eqn:Prim-Vector}) 
and the claims follow from the geometric series $1+r(1+(r+1)+\cdots+(r+1)^{k-1}) = (r+1)^k$.  
For example, 
$$
\textup{p}_{\mu(\emptyset)}(\mu(\{j\}))_i = 
\left\lbrace 
\begin{array}{rcl}
0 & : &  i < j \\
1 & : &  i = j \\
r(r+1)^{i-j-1} & : &  i > j 
\end{array} 
\right\rbrace
$$
and so $|\textup{p}_{\mu(\emptyset)}(\mu(\{j\}))| = 1 + r(1+ \ldots + (r+1)^{n-j-1}) = (r+1)^{n-j}$.
\end{proof}

We are now on the cusp of being able to prove our main results, the enumeration of the minimal 
relaxed complete partitions. One last remark and a preliminary lemma are needed. 

\begin{remark}\label{rem:prim}
As already noted in the proof of Lemma~\ref{lem:unimodCones}, $\Set \subseteq \TT \Rightarrow
|\mu(\Set)| \leq |\mu(\TT)|$ since each 
$\mu(\TT)_j - \mu(\Set)_j \geq 0$, being strictly positive if 
$j \in \TT \smallsetminus \Set $. 
\end{remark}

Recall from Section~\ref{sec:brion} that for a set $\TT \subseteq \{0,1,\ldots, n\}$ the \emph{order} 
of the vertex cone at $\mu(\TT)$ is 
$
O(\mu(\TT)) = |\mu(\TT)| + \sum_{\Set} |\textup{p}_{\mu(\Set)}(\mu(\TT))|
$ 
where the sum is taken over each $\Set = \TT \smallsetminus\{j\}$ for some $j \in \TT$. 
In the next lemma we will show that $O(\mu(\{0\})) \geq (e+1)(r+1)^n$. It is tempting to 
conclude, by virtue of set containment, that $O(\mu(\Set))$ also exceeds 
$(e+1)(r+1)^n$ whenever $0 \in \Set$ but it is not always true that $\Set \subseteq \TT$ 
implies $O(\mu(\Set)) \leq O(\mu(\TT))$ and hence the need for the different cases.

\begin{lemma} \label{lem:loose}
If $0 \in \Set$ and $|\Set| \leq 2$ then $O(\mu) \geq (e+1)(r+1)^n$. 
\end{lemma}
\begin{proof}
All claims regarding $|\mu|$ and the norms of primitive vectors can be read 
directly from Corollaries~\ref{cor:mucases} and \ref{cor:primvecs} respectively. 
If $\Set = \{0\}$ we have $|\mu| = e(r+1)^n$ and 
$|\textup{p}_{\mu( \emptyset )}(\mu(\{0\}))| = (r+1)^n$ 
and so 
$
O(\mu(\{0 \} ) ) \geq 
|\mu(\{ 0 \})| + |\textup{p}_{\mu( \emptyset )}(\mu(\{0\}))|  
= e(r+1)^{n} + (r+1)^{n}  = (e+1)(r+1)^{n}.
$ 

If $\Set = \{0,1\}$ then $|\mu| = (e+1)(r+1)^n + [e-1](r+1)^{n-1}$ 
and $|\textup{p}_{\mu( \{0\} )}(\mu(\{0,1\}))| = (r+1)^{n-1}$ and so 
$O(\mu) \geq   (e+1)(r+1)^n + [e-1](r+1)^{n-1} +  (r+1)^{n-1} = (e+1)(r+1)^n + e(r+1)^{n-1} 
\geq  (e+1)(r+1)^n$. 

If $\Set = \{0,j_2\}$ with $j_2>1$ then $|\mu| = (e+1)(r+1)^n - [r(e-1)-1](r+1)^{n-2}$ 
and $|\textup{p}_{\mu( \{j_2\} )}(\mu(\{0,j_2\}))| = (2r+1)(r+1)^{n-0-2}$ and so 
$O(\mu) \geq (e+1)(r+1)^n + [r(e-1)-1](r+1)^{n-2} + (2r+1)(r+1)^{n-2} 
= (e+1)(r+1)^n + r(e+1)(r+1)^{n-2}
\geq (e+1)(r+1)^n$. 
\end{proof}

\begin{theorem} \label{thm:r>1}
If $r \geq 2$ then 
  $\displaystyle \sigma_{P_n(e,r)}(x) 
   = 
   \sigma_{\KK_{\mu(\emptyset)}} + \sum_{j=1}^n \sigma_{\KK_{\mu(\{j\})}} + O(x^{(e+1)(r+1)^n})$
\end{theorem}

\begin{proof}
It will suffice to show that $O(\mu) \geq (e+1)(r+1)^n$ whenever 
{\bf(i)} $|\Set| \geq 2$ and $r \geq 2$, or 
{\bf (ii)} $0 \in \Set$. 
We can assume that the sets in (i) do not include $0$. 

\noindent {\bf (i):} 
If $\Set =\{j_1,j_2\}$ with $0< j_1 < j_2$ and $r \geq 2$ then, by 
Corollary~\ref{cor:mucases}(b), $|\mu(\Set)| \geq (e+1)(r+1)^n$. 
Given any other set $\TT$ of size $3$ or greater it must contain a set of the 
form $\Set =\{j_1,j_2\}$ and, by Remark~\ref{rem:prim}, 
$|\mu(\TT)| \geq |\mu(\Set)| \geq (e+1)(r+1)^n$. This settles (i).

\noindent {\bf (ii):} If $0 \in \Set$ with $|\Set| \geq 3$ then 
$\Set$ would contain a set of the form $\{ j_1,j_2\}$ with $j_1>0$ so, by 
case (i) and Remark~\ref{rem:prim}, these sets of size greater than $2$ that 
contain $0$ must also have $|\mu| \geq (e+1)(r+1)^n$. From Lemma~\ref{lem:loose}, 
we know that sets of size $1$ and $2$ that contain $0$ have the property that 
$O(\mu) \geq (e+1)(r+1)^n$ and so $O(\mu(\Set)) \geq (e+1)(r+1)^n$ for every set $\Set$ 
that contains $0$. 
\end{proof}

\begin{theorem}\label{thm:r=1}
If $r = 1$ then 
  $\displaystyle \sigma_{P_n(e,r)}(x) 
  = 
  \sigma_{\KK_{\mu(\emptyset)}} + \sum_{j=1}^n \sigma_{\KK_{\mu(\{j\})}} + 
  \sum_{j_1=1}^{n} \sum_{j_2 = j_1+2}^n \sigma_{\KK_{\mu(\{j_1, j_2\})}} + O(x^{(e+1)(r+1)^n})$
\end{theorem}

\begin{proof}
It will suffice to show that if $r = 1$ then $O(\mu) \geq (e+1)(r+1)^n$ 
whenever 
{\bf (i)} $\Set = \{j, j+1\}$,  
{\bf (ii)} $|\Set| \geq 3$, 
or 
{\bf (iii)} $0 \in \Set$. 
We can assume that the sets in (i) do not include $0$. 

\noindent {\bf (i):} 
Corollary~\ref{cor:mucases}(b) shows that if $\Set = \{j_1,j_2\}$ with $0< j_1=j_2-1$ then 
$|\mu| \geq (e+1)(r+1)^n$, for every $r \geq 1$.  
It follows from Remark~\ref{rem:prim} that any $\Set$ with two 
consecutive non-zero elements has a 1-norm $\geq (e+1)(r+1)^n$. 
In contrast to $r \geq 2$,  the case of $\Set = \{j_1,j_2\}$ with $0< j_1<j_2-1$ when $r=1$ has 
$
|\mu| 
= (e+1)2^{n}+\left[2(2-1)-1\right](e+1)2^{n-3} 
= (e+1)2^{n} - (e+1)2^{n-3} < (e+1)2^n
$ 
and, by consulting Corollary~\ref{cor:primvecs}, one can check that $O(\mu) < (e+1)2^n$. 

\noindent {\bf (ii):} 
If $|\Set| \geq 3$ then it may contain two consecutive elements in which case, 
by (i) and Remark~\ref{rem:prim}, $|\mu| \geq (e+1)(r+1)^n$. 
Alternatively, $\Set$ contains 
a set of the form $\{j_1,j_2,j_3 \}$ with $0<j_1<j_2-1<j_3-2$ and so 
$|\mu(\Set)| \geq |\mu(\{ j_1,j_2,j_3\})| \geq (e+1)2^n$ by Corollary~\ref{cor:mucases}(c). 

\noindent {\bf (iii):} Assume now that $0 \in \Set$. 
For $|\Set|\leq 2$ the result follows directly from Lemma~\ref{lem:loose}. 
For $|\Set|\geq 4$ then $\Set$ must contain a three element set that does not contain $0$ 
and so case (ii) combined with Remark~\ref{rem:prim} ensures that $|\mu(\Set)| \geq (e+1)2^n$. 
All that remains are the three element sets $\Set = \{0,j_2,j_3 \}$ with $j_2<j_3-1$. 

If $\Set = \{0,1,j_3\}$ then $|\mu| = (e+1)2^n + [5e-1]2^{n-3}$ 
and $|\textup{p}_{\mu( \{ 1,j_3 \} )}(\mu(\Set))| = 2(3) 2^{n-3}$ 
and so 
$O(\mu) 
\geq (e+1)2^n + [5e-1+6]2^{n-3} 
=  (e+1)2^n + 5(e+1)2^{n-3}
\geq (e+1)(r+1)^n$. 
Otherwise, $\Set = \{0,j_2,j_3\}$ with $j_2>1$ then 
$|\mu(\{0,j_1,j_2\})| = (e+1)2^n + (7e -2) 2^{n-4}$ 
and $|\textup{p}_{\mu( \{ j_2,j_3 \} )}(\mu(\Set))| = 3^2 2^{n-4}$
and so 
$O(\mu) 
\geq (e+1)2^n + 7(e+1)2^{n-4} 
\geq (e+1)(r+1)^n$. 
\end{proof}

Lastly, we present explicit descriptions of the above generating functions by computing 
$\sigma_{\KK_{\bf v}} $ for ${\bf v} =  \mu(\emptyset),  \mu(\{j_1\})$ and $ \mu(\{j_1,j_2\})$ 
with $1\leq j_1< j_2-1$. Reading directly fron Equation~(\ref{eqn:Flip}) 
and using all necessary 1-norms and primitive vectors from 
Corollaries~\ref{cor:mucases} and \ref{cor:primvecs} we have:

\noindent $\displaystyle 
\sigma_{\KK_{\mu(\emptyset)}} 
\, = \, 
x^0\prod_{j=0}^n \frac{1}{1-x^{|\textup{p}_{\mu(\emptyset)}(\mu(\{j\})|}} 
\, = \, \prod_{j=0}^n\frac{1}{1-x^{(r+1)^{j}}} 
$
and 
\begin{align*}
\sigma_{\KK_{\mu(\{j_1\})}} 
&=  
x^{r(e+1)(r+1)^{n-1}}
\frac{-x^{|\textup{p}_{\mu(\emptyset)}(\mu(\{j_1\})|} }
{1-x^{|\textup{p}_{\mu(\emptyset)}(\mu(\{j_1\})|} }
\prod_{k\not=j_1} \frac{1}{1-x^{|\textup{p}_{\mu(\{j_1\})}(\mu(\{j_1,k\})|}} \\ 
& = 
x^{r(e+1)(r+1)^{n-1}}
\frac{-x^{(r+1)^{n-j_1}}}{1-x^{(r+1)^{n-j_1}}}
\frac{1}{1-x^{2(r+1)^{n-j_1}}} 
\prod_{k=j_1+1}^n \frac{1}{1-x^{(r+1)^{n-k}}}
\prod_{k=0}^{j_1-2} \frac{1}{1-x^{(2r+1)(r+1)^{n-k-2}}}\\
& = 
-\frac{x^{r(e+1)(r+1)^{n-1} + (r+1)^{n-j_1}}}
{1-x^{(r+1)^{n-j_1}} 1-x^{2(r+1)^{n-j_1}}}
\prod_{t=0}^{n-j_1-1} \frac{1}{1-x^{(r+1)^{t}}} 
\prod_{t=n-j_1}^{n-2} \frac{1}{1-x^{(2r+1)(r+1)^{t}}}, 
\end{align*}

For $\Set =  \{j_1,j_2\}$ with $0< j_1 < j_2-1$ and $r=1$
\begin{align*}
\sigma_{\KK_{\mu(\{j_1,j_2\})}} 
& =  
x^{(e+1)7 \cdot 2^{n-3}}
\prod_{k\in\{j_1,j_2\}}
\frac{-x^{|\textup{p}_{\mu(\{k\})}(\mu(\{j_1,j_2\})|}}
{1-x^{|\textup{p}_{\mu(\{k\})}(\mu(\{j_1,j_2\})|}}
\prod_{k\not\in\{j_1,j_2\}}
\frac{1}{1-x^{|\textup{p}_{\mu(\{j_1,j_2\})}(\mu(\{j_1,j_2,k\})|}} \\
& =
x^{(e+1)7 \cdot 2^{n-3}} x^{2^{n-j_2}}x^{3 \cdot 2^{n-j_1 -2}}
\prod_{t=0}^{n-j_2+1}
\frac{1}{1-x^{2^t}} \prod_{t = n-j_2}^{n-j_1 -1}
\frac{1}{1-x^{3 \cdot 2^t}}
\prod_{t = n - j_1 -2}^{n - 4} \frac{1}{1-x^{9 \cdot 2^t}}.
\end{align*}

Note that when $(e,r)=(1,2)$ and $n=2$ we recover the enumerating function $\sigma_{P_2(1,2)}(x)$ 
as described in the introduction. Setting $e=0$ we recover R{\o}dseth's \cite{RodMpart, Rod} 
enumerating functions for the minimal complete partitions.

\end{document}